\title{Null boundary controllability of a 1-dimensional heat equation with an internal point mass}
\author{Scott W Hansen
	\thanks{Department of Mathematics, Iowa State University, Ames, IA 50010, USA 		(shansen@iastate.edu) (jesusmtz@iastate.edu). 
	Funding for this research was provided in part by the National Science 
	Foundation under award number 
	DMS-1312952.}
	 ~and ~Jose de Jesus Martinez
	 \footnotemark[1]}
\date{}
\numberwithin{thm}{subsection}
\numberwithin{cor}{subsection}
\newtheorem{lemma}{Lemma}\numberwithin{lemma}{subsection}
\newtheorem{prop}{Proposition}\numberwithin{prop}{subsection}
\newtheorem{rmk}{Remark}\numberwithin{rmk}{subsection}
\newcommand{\eq}[1]{\begin{align*}#1\end{align*}}
\newcommand{\eql}[2]{\begin{align}\label{#1}#2 \end{align}}
\def\<#1>{\langle#1\rangle}
\newcommand{\N}{\mathbb{N}}
\newcommand{\R}{\mathbb{R}}
\newcommand{\A}{\mathcal{A}}
\newcommand{\hi}{\mathcal{H}}
\newcommand{\W}{\mathcal{W}}
\newcommand{\la}{\lambda}
\newcommand{\ep}{\epsilon}
\newcommand{\ue}{{u_\ep}}
\newcommand{\ve}{{v_\ep}}
\newcommand{\ze}{{z_\ep}}
\begin{document}
%
\maketitle

\begin{abstract} We consider a linear hybrid system composed by two rods of equal length connected by a point mass. We show that the system is null controllable with Dirichlet and Neumann controls. The results are based on a careful spectral spectral analysis together with the moment method.

\vspace{2mm}
\end{abstract}

\section{Introduction}
\label{Introduction}
In this article we prove the boundary null controllability of the temperature of a linear hybrid system consisting of two wires or rods connected by a point mass.
More precisely, we consider the following system:
\eql{sys}{
&\begin{cases}
	\dot{u} - u'' = 0, & t>0, \ x\in \omega_1=(-1,0) \\
	\dot{v} - v'' = 0, & t>0, \ x\in \omega_2=(0,1) \\
	\dot{z} = v'(t,0) - u'(t,0), & t>0\\
	u(t,0) = v(t,0) = z(t), & t>0	\\
	u(t,-1) = 0,
\end{cases}
}
with either Dirichlet control
\eql{dir control}{
	v(t,1)=f(t), \qquad t>0
}
or Neumann control
\eql{neu control}{
	v'(t,1)=f(t), \qquad t>0.
}
In the above and throughout this article, $~'$ denotes spatial derivatives and $\dot{~~}$ denotes temporal derivatives. 
In addition, $u=u(t,x)$ and $v=v(t,x)$ denote the temperature on $\omega_1$ and $\omega_2$, and $z=z(t)$ denotes the temperature of the point mass.
The initial conditions at time $t=0$ are given by
\eq{
	\begin{cases}
	u^0(x) = u(0,x), & x\in \omega_1\\
	v^0(x) = v(0,x), & x\in \omega_2\\
	z^0 = z(0),
	\end{cases}
}
where the triple $\{u^0, v^0, z^0\}$ will be given in an appropriately defined function space.

System \eqref{sys} with the homogenous boundary condition
\eql{dir homogeneous}{
	v(t,1)=0, \qquad t>0
} 
can be viewed as the limit of the following ``epsilon'' system with unit density on $(-1,1)\setminus(-\ep,\ep)$ and with density $1/2\ep$ on $(-\ep,\ep)$:
\eql{approx sys}{
&\begin{cases}
	\dot{\ue} - \ue'' = 0, & t>0,\  x\in (-1,-\ep)\\
	\dot{\ve} - \ve'' = 0, & t>0,\  x\in (\ep,1)\\
	\frac{1}{2\ep}\dot{\ze} - \ze'' = 0, & t>0, \ x\in (-\ep,\ep)
\end{cases}
}
where $\ue$, $\ve$ and $\ze$ satisfy the conditions
\eql{approx conditions}{
\begin{split}
	&\ue(t,-\ep) = \ze(t,-\ep),\ \ze(t,\ep) = \ve(t,\ep),\\
	&\ue'(t,-\ep) = \ze'(t,-\ep),\ \ze'(t,\ep) = \ve'(t,\ep),\\
	&\ue(t,-1) = \ve(t,1)=0, 
\end{split}
}
for $t>0$.
In fact, in \cite{HMmodeling} the authors have shown that under appropriate assumptions of the initial data, solutions of \eqref{approx sys} with \eqref{approx conditions} converge weakly to solutions of \eqref{sys} and \eqref{dir homogeneous}.

The hybrid system \eqref{sys} is a variant of previously studied hybrid models for systems of strings and beams with interior point masses.
Hansen and Zuazua used the method of characteristics in \cite{HZwavepoint} to prove the boundary null controllability of an analogous string system with an interior point mass.
In \cite{LitTay} Littman and Taylor use transform methods to prove boundary feedback stabilization of the string mass system.
In \cite{CZbeam} and \cite{CZeulerbeam}, Castro and Zuazua used method of non-harmonic Fourier series to prove boundary controllability of systems of either Rayleigh or Euler-Bernoulli beams with interior point masses. 
We refer to \cite{LMbeam}, \cite{MoRaoCo}, \cite{CoMo}, \cite{ZhaoWeiss}, \cite{GuoIvanov} and \cite{Guo} for related results on control and stabilization of systems of beams with end masses.

Our main results are the following.
\begin{prop}\label{prop: null control}
System \eqref{sys} with either Dirichlet control \eqref{dir control} or Neumann control \eqref{neu control} is null controllable in any time $T>0$. More precisely, given $T>0$ there is a control $f\in L^2(0,T)$ such that given initial data $\{u^0,v^0,z^0\}\in L^2(\omega_1)\times L^2(\omega_2)\times \R$ we have that $\{u(T,x),v(T,x),z(T)\}=\{0,0,0\}$.
\end{prop}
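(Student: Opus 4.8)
The plan is to recast \eqref{sys} as an abstract Cauchy problem and solve the null-control problem by the moment method. First I would introduce the state space $\hi = L^2(\omega_1)\times L^2(\omega_2)\times \R$ equipped with the energy inner product $\langle (u,v,z),(\hat u,\hat v,\hat z)\rangle = \int_{\omega_1} u\hat u\dx + \int_{\omega_2} v\hat v\dx + z\hat z$, and define the unbounded operator $\A(u,v,z) = (u'', v'', v'(0)-u'(0))$ on the domain of $H^2$ triples satisfying $u(-1)=0$ and the matching condition $u(0)=v(0)=z$. One checks that $\A$ is self-adjoint, negative, and has compact resolvent, hence generates an analytic semigroup; writing the control through an unbounded operator $B$ (a Dirichlet or Neumann lift at $x=1$), the problem becomes $\dot y = \A y + Bf$, and null controllability amounts to the existence of $f\in L^2(0,T)$ steering $y^0=\{u^0,v^0,z^0\}$ to $0$.

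Next I would carry out the spectral analysis on which the moment method rests. Solving $\A\Phi = \la\Phi$ on each interval produces eigenfunctions built from $\cosh$ and $\sinh$ of $\sqrt{-\la}\,x$; imposing $u(-1)=0$, the matching $u(0)=v(0)=z$, and the jump relation $\la z = \psi'(0)-\phi'(0)$ coming from the third equation yields a transcendental characteristic equation for $\la$. From this I expect the eigenvalues $\{\la_n\}$ to be real, negative and simple, with $\mu_n := -\la_n$ growing quadratically, $\mu_n \asymp n^2$, so that the summability condition $\sum_n 1/\mu_n < \infty$ and a uniform gap $\sqrt{\mu_{n+1}}-\sqrt{\mu_n}\ge \gamma>0$ both hold. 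Self-adjointness together with the compact resolvent guarantee that the normalized eigenfunctions $\{\Phi_n\}$ form an orthonormal basis of $\hi$.

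With the spectral data in hand, I would reduce $y(T)=0$ to a moment problem. Expanding the solution in the basis $\{\Phi_n\}$ and pairing the variation-of-constants formula with $\Phi_n$, the terminal condition becomes, for every $n$,
\eq{\int_0^T f(t)\,e^{\la_n(T-t)}\dt = -\frac{e^{\la_n T}\,\langle y^0,\Phi_n\rangle_{\hi}}{b_n},}
where $b_n = B^*\Phi_n$ equals, up to normalization, the trace $\psi_n'(1)$ in the Dirichlet case \eqref{dir control} and $\psi_n(1)$ in the Neumann case \eqref{neu control}, with $\psi_n$ the restriction of $\Phi_n$ to $\omega_2$. After the substitution $t\mapsto T-t$ this is a classical moment problem for the decaying exponentials $\{e^{-\mu_n t}\}$ on $(0,T)$, with data carrying the favorable factor $e^{-\mu_n T}$. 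To produce $f$ I would invoke the Fattorini--Russell construction: under $\sum 1/\mu_n<\infty$ together with the gap condition there exists, for each $T>0$, a biorthogonal family $\{\theta_m\}\subset L^2(0,T)$ to $\{e^{-\mu_n t}\}$ with $\|\theta_m\|_{L^2(0,T)} \le C_\ep\, e^{\ep\mu_m}$ for arbitrarily small $\ep>0$. Setting $f(t) = \sum_m \tilde c_m\,\theta_m(T-t)$ with $\tilde c_m = -e^{-\mu_m T}\langle y^0,\Phi_m\rangle_{\hi}/b_m$, the factor $e^{-\mu_m T}$ dominates the growth $e^{\ep\mu_m}$ once $\ep<T$, so that $|\tilde c_m|\,\|\theta_m\| \le C_\ep\,|\langle y^0,\Phi_m\rangle_{\hi}|\,|b_m|^{-1}e^{-(T-\ep)\mu_m}$; since $\{\langle y^0,\Phi_m\rangle_{\hi}\}\in\ell^2$ and $|b_m|$ is bounded below, the series converges in $L^2(0,T)$ and the resulting $f$ solves all moments simultaneously, giving $y(T)=0$.

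I expect the main obstacle to be the spectral step rather than the moment machinery. One must show from the transcendental characteristic equation that the point mass neither destroys the quadratic growth and uniform separation of the $\mu_n$ nor drives the boundary traces $b_n$ to zero. Establishing the lower bound $|b_n|\ge c>0$ (or at least a controlled decay) for both control types is delicate, because a vanishing trace at $x=1$ would render the corresponding moment unsolvable and break the convergence argument above. This, rather than the biorthogonal estimate, is where the coupling through $z$ forces the careful asymptotic analysis, and it is the point I would spend the most effort on.
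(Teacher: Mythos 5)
Your overall architecture --- self-adjoint generator, spectral analysis, duality reduction to a moment problem for the exponentials $e^{\la_n t}$, and a Fattorini--Russell biorthogonal family --- is the same as the paper's, but the spectral facts you assert (and which your convergence argument actually uses) are wrong at exactly the two places where this system is delicate. The claimed uniform gap $\sqrt{-\la_{n+1}}-\sqrt{-\la_n}\ge\gamma>0$ is false in the Dirichlet case. By Proposition \ref{prop: dir eval separation}, the Dirichlet spectrum consists of $\la_{2k}=-(k\pi)^2$ interlaced with $\la_{2k-1}=-\mu_k^2$, where $\mu_k=(k-1)\pi+\frac{2}{k\pi}+\mathcal{O}(k^{-2})$; hence the consecutive frequencies $k\pi$ and $\mu_{k+1}$ differ by roughly $\frac{2}{(k+1)\pi}\to 0$, i.e.\ the point mass produces asymptotically degenerate \emph{pairs} of frequencies. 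What is true --- and is exactly what the Fattorini--Russell construction needs alongside $\sum 1/|\la_n|<\infty$ --- is the weaker gap \eqref{dir eval separation} between the eigenvalues themselves: $|\la_{2k+1}-\la_{2k}|=(\mu_{k+1}+k\pi)(\mu_{k+1}-k\pi)\to 4$, a product of a large factor and a vanishing one. Your hypothesis, a gap in the frequencies, would force $|\la_{n+1}-\la_n|\to\infty$ and can never be verified here; the argument only closes after one proves the sharp asymptotics of $\mu_k$ above and works with the eigenvalue gap instead. (In the Neumann case the eigenvalue gaps do grow like $n$, cf.\ \eqref{neu eval separation}, so there your assumption is harmless.)

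Second, your convergence estimate invokes ``$|b_m|$ bounded below,'' which fails for Neumann control: by \eqref{neu bns} the odd-indexed input coefficients are $b_{2k-1}=-\tan\mu_{2k-1}$ with $\tan\mu_{2k-1}=\frac{1}{k\pi}+\mathcal{O}(k^{-2})\to 0$, so only a bound of the form $|b_n|\ge \ep/n$ holds. You did flag this as a possible obstacle, and the repair is routine: $1/|b_n|$ grows at most polynomially while $|a_n e^{\la_n T}|$ decays like $e^{-\delta n^2}$, so the series defining $f$ still converges in $L^2(0,T)$. Both gaps are therefore fixable, but as written your spectral step --- which you correctly identified as the crux --- asserts the wrong separation property in the Dirichlet case and the wrong trace bound in the Neumann case; the statements that actually hold are the ones supplied by Propositions \ref{prop: dir eval separation} and \ref{prop: neu eval separation} of the paper.
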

The solutions in Proposition \ref{prop: null control}, are defined by transposition in the spaces $C(0,T;X_{-1/2})$ for the case of Dirichlet control and $C(0,T;\hi)$ for the case of Neumann control; see Section \ref{section: Proof of Controllability results}.

Our general approach is to reduce the control problem to a moment problem.
We consider the case of Dirichlet control and Neumann control separately in Section \ref{section: Proof of Controllability results}.

\section{Preliminaries}
\label{Preliminaries}
We begin with a discussion of well-posedness of the system \eqref{sys} with either homogeneous Dirichlet boundary condition \eqref{dir homogeneous} or Neumann boundary condition
\eql{neu homogeneous}{
	v'(t,1)=0, \qquad t>0.
}
Given $u$, $v$ and $z$ defined on $\omega_1$, $\omega_2$ and $\R$ respectively, define $y=(u,v,z,)^t$ where ${}^t$ denotes transposition. 
Let
\eq{
	\hi = L^2(\omega_1)\times L^2(\omega_2)\times\R
}
equipped with the norm
\eq{
	\|y\|^2_\hi=\|(u,v,z)\|^2_{\hi} = \|u\|^2_{\omega_1}+\|v\|^2_{\omega_2}+|z|^2
}
where $\|\cdot\|_{\omega_i}$ is the usual norm in $L^2(\omega_i)$ for $i=1,2$.
In the Dirichlet case \eqref{dir homogeneous}, let
\eql{dir spaces}{
	&\vartheta_{\omega_1}=\{u\in H^1(\omega_1)~|~u(-1)=0\}\nonumber\\
	&\vartheta_{\omega_2}=\{v\in H^1(\omega_2)~|~v(1)=0\}\\
	&\vartheta=\{(u,v)\in \vartheta_1\times\vartheta_2~|~u(0)=v(0)\}\nonumber
}
equipped with the norms
\eq{
	&\|u\|^2_{\vartheta_{\omega_i}} = \|u'\|^2_{L^2(\omega_i)}, \qquad i=1,2\\
	&\|(u,v)\|^2_\vartheta = \|u\|^2_{\vartheta_{\omega_1}}+\|v\|^2_{\vartheta_{\omega_2}}.
}
One can see that $\vartheta$ is algebraically and topologically equivalent to $H^1_0(\Omega)$ although it will be more convenient to think of $\vartheta$ as a subspace of $\vartheta_1\times\vartheta_2$. 
The space
\eq{
	\W = \{ (u,v,z)\in\vartheta\times\R~|~u(0)=v(0)=z\}
}
is a closed subspace of $\vartheta\times\R$ with norm $\|(u,v,z)\|^2_{\W} = \|(u,v)\|^2_\vartheta$.
In the Neumann case \eqref{neu homogeneous}, replace the definition of $\vartheta_{\omega_2}$ in \eqref{dir spaces} by 
\eql{neu spaces}{
	&\vartheta_{\omega_2}=H^1(\omega_2),
}
and otherwise the space $\mathcal W$ is defined the same way.    In either case, 
it is easy to show (see \cite{HMmodeling}) that the space $\W$ is densely and continuously embedded in the space $\hi$.
Define the operator $\A:D(\A)\subset\hi\rightarrow \hi$ by
\eql{dir gen}{
\A=
\begin{pmatrix} 
	d^2&0&0\\
	0&d^2&0\\
	- \delta_0d &  \delta_0d & 0\\
\end{pmatrix}
}
where $d$ denotes the (distributional) derivative operator, $\delta_0$ denotes the Dirac delta function with mass at $x=0$, and the domain $D(\A)$ of $\A$ is given in the Dirichlet case  (\ref{dir homogeneous}) by
\eql{dir domain of A}{
	D(\A)=\{y\in \W~:~ u\in H^2(\omega_1),~v\in H^2(\omega_2)\}.
}
and in the Neumann case   (\ref{neu homogeneous})   by 
\eql{neu domain of A}{
	D(\A)=\{y\in \W~:~ u\in H^2(\omega_1),~v\in H^2(\omega_2), v'(1)=0\}.
}
When $D(\A)$ is endowed with the graph-norm topology
\eq{
	\|y\|^2_{D(\A)} = \|y\|^2_{\hi}+\|\A y\|^2_{\hi}
}
it becomes a Hilbert space with continuous embedding in $\hi$. 
We can therefore write the homogeneous point-mass systems \eqref{sys}, \eqref{dir homogeneous}   and \eqref{sys}, \eqref{neu homogeneous} as
\eql{dir cauchy}{
	\dot{y}(t) = \A y(t), \quad y(0) = y^0,\quad t>0
}
where $y^0=(u^0,v^0,z^0)$.

\begin{prop}\label{prop: dir well posed}
The unbounded operator $\A$ given by \eqref{dir gen} in domain $D(\A)$ as in \eqref{dir domain of A} is a bijective, self-adjoint and dissipative operator with a compact inverse. Furthermore, $\A$ is the infinitesimal generator of a strongly continuous, compact and analytic semigroup $({\mathbb T}_t)_{t\geq 0}$. 
\end{prop}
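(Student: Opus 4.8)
The plan is to establish the stated properties of $\A$ in a sequence that exploits the Hilbert-space structure already set up. I would first verify that $\A$ is symmetric and dissipative by a direct integration-by-parts computation, then upgrade symmetry to self-adjointness by showing $\A$ is invertible (surjective) with bounded inverse, and finally extract compactness of the resolvent from a Sobolev embedding. Once $\A$ is self-adjoint, dissipative and has compact resolvent, the semigroup properties follow from standard generation theorems.

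**Step 1: dissipativity and symmetry.** For $y=(u,v,z)\in D(\A)$ I would compute $\langle \A y, y\rangle_\hi$. The subtle point is how the distributional terms $-\delta_0 d\, u + \delta_0 d\, v$ in the third component interact with the inner product: the $\R$-component of $\A y$ evaluates $v'(0)-u'(0)$ (the jump in flux across the mass), which pairs against $\bar z = \overline{u(0)} = \overline{v(0)}$. Integrating $u''\bar u$ over $\omega_1$ and $v''\bar v$ over $\omega_2$ by parts and using the boundary conditions $u(-1)=0$, $v(1)=0$ (Dirichlet) or $v'(1)=0$ (Neumann), the boundary terms at $x=0$ are $u'(0)\overline{u(0)}$ and $-v'(0)\overline{v(0)}$; these combine with the $(v'(0)-u'(0))\bar z$ term and, because $u(0)=v(0)=z$, cancel exactly. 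This yields
\[
\langle \A y, y\rangle_\hi = -\|u'\|_{\omega_1}^2 - \|v'\|_{\omega_2}^2 \le 0,
\]
so $\A$ is dissipative, and the same computation shows $\langle \A y, w\rangle_\hi = \langle y, \A w\rangle_\hi$ for $y,w\in D(\A)$, i.e. $\A$ is symmetric.

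**Step 2: bijectivity and self-adjointness.** I would show $\A$ is surjective by solving $\A y = g$ for arbitrary $g=(f_1,f_2,c)\in\hi$; this reduces to solving $u''=f_1$, $v''=f_2$ on each subinterval with the transmission condition $v'(0)-u'(0)=c$ together with the matching $u(0)=v(0)=z$ and the outer boundary conditions. This is a nondegenerate two-point boundary value problem whose unique solvability I would verify directly, giving $\A^{-1}$ explicitly (or at least its existence) and placing its range in $D(\A)$; the solution inherits $H^2$ regularity on each piece from $f_1,f_2\in L^2$. A symmetric operator that is onto $\hi$ is automatically self-adjoint (equivalently, $0\in\rho(\A)$ for a symmetric operator forces self-adjointness), so this step simultaneously delivers bijectivity and self-adjointness.

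**Step 3: compact inverse and semigroup generation.** Since $\A^{-1}$ maps $\hi$ into $D(\A)\subset \W$, and $\W$ embeds into $\hi$ compactly—this compactness comes from the compact embedding $H^1(\omega_i)\hookrightarrow L^2(\omega_i)$ on each bounded interval, the $\R$-factor being finite-dimensional—the operator $\A^{-1}:\hi\to\hi$ is compact. With $\A$ self-adjoint and dissipative it is in particular m-dissipative, hence by the Lumer--Phillips theorem it generates a strongly continuous contraction semigroup $({\mathbb T}_t)_{t\ge0}$; self-adjointness and non-positivity of the spectrum make the semigroup analytic (a self-adjoint dissipative generator has its spectrum on $(-\infty,0]$, so the standard sectorial criterion applies), and compactness of the resolvent forces compactness of ${\mathbb T}_t$ for $t>0$.

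**Main obstacle.** I expect the delicate part to be the careful bookkeeping in Step 1—correctly interpreting the distributional row $(-\delta_0 d,\ \delta_0 d,\ 0)$ so that the point-mass coupling term is paired with $\bar z$ and seeing the boundary contributions at $x=0$ cancel against it; everything downstream (surjectivity, compactness, generation) is then routine once this identity and the explicit resolvent are in hand.
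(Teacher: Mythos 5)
Your proof is correct, but note that the paper itself does not prove Proposition \ref{prop: dir well posed}: it simply refers the reader to the companion paper \cite{HMmodeling} for the details, so there is no internal proof to compare against. Your proposal fills that gap with what is surely the intended standard argument: symmetry and dissipativity via integration by parts (with the boundary terms at $x=0$ cancelling against the point-mass coupling term $(v'(0)-u'(0))\bar z$ exactly as you describe), surjectivity by explicitly solving the transmission boundary value problem $u''=f_1$, $v''=f_2$, $v'(0)-u'(0)=c$, $u(0)=v(0)=z$ (a nondegenerate $2\times 2$ linear system for the two free integration constants), self-adjointness from ``symmetric plus onto'', compactness of $\A^{-1}$ from the compact embedding $\W\hookrightarrow\hi$, and generation of an analytic, compact, contraction semigroup from self-adjointness, dissipativity and compact resolvent. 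One small point you should make explicit: to conclude that $\A^{-1}:\hi\to\hi$ is compact you need $\A^{-1}$ to be \emph{bounded} from $\hi$ into $\W$, not merely that its range lies in $\W$. This follows either from the closed graph theorem or directly from the energy identity
\begin{equation*}
\|u'\|^2_{\omega_1}+\|v'\|^2_{\omega_2}=-\langle \A y,y\rangle_\hi\le\|\A y\|_\hi\,\|y\|_\hi
\end{equation*}
combined with the Poincar\'e-type inequality $\|y\|_\hi\le C\|y\|_{\W}$ valid on $\W$, which yields $\|y\|_{\W}\le C\|\A y\|_\hi$. With that sentence added, the argument is complete.
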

Refer to \cite{HMmodeling} for a detailed proof of the above proposition for the Dirichlet case \eqref{sys}, \eqref{dir homogeneous} .
As a consequence of Proposition \ref{prop: dir well posed}, given initial data $y^0\in\hi$ there exists a unique solution
\eq{
	y\in C([0,\infty);\hi)
}
to the Cauchy problem \eqref{dir cauchy}.   If in addition,  $y^0\in D(\mathcal {A})$ then 
$y\in C([0,\infty), D(\mathcal {A}))$.   

In the next subsection it is shown that $\mathcal A$ has only negative eigenvalues, hence 
$-\mathcal A$ is positive, self-adjoint it provides  an isomorphism: $D({\mathcal A})\to {\mathcal H}$.  
Moreover,  fractional powers of $-\mathcal A$ are well-defined.  
Let $X_1=D(\mathcal{A})$ and for $\alpha\in [0,1]$, define $X_\alpha= D((-\mathcal{A})^\alpha)$ and $X_{-\alpha}= X_\alpha'$, the dual space relative to the pivot space $\mathcal H=X_0$ of $X_\alpha$.     
Correspondingly, the semigroup ${\mathbb T}$ remains an analytic semigroup on 
the invariant subspaces  $X_\alpha$, $0\le \alpha\le 1$, and extends continuously to an analytic semigroup on spaces $X_\alpha$, $-1\le \alpha\le 0$;  see e.g., \cite{TuWe} for full explanation.      
The norm on $X_\alpha$ is given by $\|y\|_\alpha^2 = \<(-\A)^\alpha y,(-\A)^\alpha y>_{\mathcal H}$.
In particular, $X_{1/2}$ is the completion of $X_1$ with respect to the norm 
\eq{
	\|y\|_{1/2}^2= \<-{\mathcal A} y, y>_0 .
}
Integration by parts gives
\eq{ 
	\|y\|_{1/2}^2=  \< y, y>_{\mathcal W}.
}
Thus, $X_{1/2}$ is topologically equivalent to $H^1_0(\Omega)$ in the Dirichlet case (\ref{dir homogeneous}) and $\{f\in H^1(\Omega): f(-1)=0$\} in the Neumann case (\ref{neu homogeneous}). 

\subsection{Spectral analysis for Dirichlet case \eqref{sys}, \eqref{dir homogeneous} }
\label{subsec: dir spectrum}

By Proposition \ref{prop: dir well posed}, the spectrum $\sigma(\A)$ of $\A$ is contained in the negative real axis and consists of eigenvalues $\{\la_n\}$ tending to negative infinity with corresponding eigenvectors $\{\varphi_n\}_{n\in\N}$ forming an orthogonal system for $\hi$. 
\begin{prop}\label{prop: dir eigensystem}
The eigenvalues $\{\la_n\}_{n\in\N}$ of $\A$ in the Dirichlet case   (\ref{dir homogeneous})   are distinct and given by
\eq{
	\la_{2k}=-(k\pi)^2, \quad \la_{2k-1}= -\mu_k^2 ~ \text{ for } k\in\N
}
where $\mu_k$ is the $k$-{\rm th} positive root of the characteristic equation
\eql{dir characteristic equation}{
	\mu = 2 \cot\mu.
}	
The corresponding eigenvectors are given by
\eq{
	\varphi_{2k}(x)=
		\begin{pmatrix}
			\sin(k\pi  x)\\
			\sin(k\pi x)\\
			0
		\end{pmatrix}, \quad
	\varphi_{2k-1}(x)=
		\begin{pmatrix}
			\sin( (1+x) \mu_k )\\
			\sin((1-x) \mu_k)\\
			\sin(\mu_k)\\
		\end{pmatrix}
}
and $\varphi_n\in D(\A)$ for all $n\in\N$.
\end{prop}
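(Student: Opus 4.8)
The plan is to reduce the eigenvalue equation $\A\varphi = \la\varphi$ to a boundary value problem for a pair of second-order ODEs coupled through the point mass. By Proposition \ref{prop: dir well posed}, $\A$ is self-adjoint, dissipative and bijective, so every eigenvalue is a strictly negative real number; accordingly I write $\la = -\mu^2$ with $\mu > 0$. Writing $\varphi = (u,v,z)^t$ and reading off the three rows of $\A$, the eigenvalue equation becomes $u'' = -\mu^2 u$ on $\omega_1$, $v'' = -\mu^2 v$ on $\omega_2$, together with the scalar identity $v'(0) - u'(0) = -\mu^2 z$ coming from the distributional third row. The membership $\varphi \in \W$ supplies the boundary conditions $u(-1) = 0$ and $v(1)=0$, as well as the matching conditions $u(0) = v(0) = z$.

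Next I would solve the two ODEs. To build in the endpoint Dirichlet conditions automatically, I write $u(x) = a\sin((1+x)\mu)$ and $v(x) = b\sin((1-x)\mu)$, so that $u(-1) = 0$ and $v(1) = 0$ hold identically. The matching condition $u(0) = v(0)$ then reads $a\sin\mu = b\sin\mu$, with common value $z$. I split into two cases according to whether $\sin\mu$ vanishes. If $\sin\mu \neq 0$, then $a = b$; normalizing $a = 1$ gives exactly the candidate eigenvector $\varphi_{2k-1}$, and substituting $u'(0) = \mu\cos\mu$, $v'(0) = -\mu\cos\mu$, $z = \sin\mu$ into the jump relation $v'(0)-u'(0) = -\mu^2 z$ yields $-2\mu\cos\mu = -\mu^2\sin\mu$, i.e.\ the characteristic equation $\mu = 2\cot\mu$. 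If instead $\sin\mu = 0$, then $\mu = k\pi$, the condition $u(-1)=0$ forces $z = u(0) = 0$, the jump relation becomes automatic, and one is left with two decoupled homogeneous Dirichlet eigenfunctions $\sin(k\pi x)$ on each rod, producing $\varphi_{2k}$ with $\la_{2k} = -(k\pi)^2$.

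Finally I would establish distinctness and the stated indexing by analyzing the roots of $\mu = 2\cot\mu$. On each interval $((k-1)\pi, k\pi)$ the function $\cot\mu$ decreases strictly and continuously from $+\infty$ to $-\infty$, while the line $\mu/2$ is positive and increasing; by the intermediate value theorem there is therefore exactly one root $\mu_k$ in that interval. This places the roots in the interlacing pattern $\mu_1 < \pi < \mu_2 < 2\pi < \mu_3 < \cdots$, so that no $\mu_k$ coincides with any $j\pi$ and the two families of eigenvalues are mutually distinct. Since $\A$ has compact resolvent, its spectrum is exactly this discrete set, and relabeling in increasing modulus gives $\la_{2k-1} = -\mu_k^2$ and $\la_{2k} = -(k\pi)^2$. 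A direct check that each $\varphi_n$ is smooth on each rod and satisfies the matching and boundary conditions shows $\varphi_n \in D(\A)$.

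The main obstacle I anticipate is the bookkeeping around the degenerate case $\sin\mu = 0$ — in particular noticing that it forces $z=0$ and decouples the two rods — and then turning the qualitative root-interlacing picture into a clean argument that the two eigenvalue families are distinct and together exhaust the spectrum with the asserted ordering.
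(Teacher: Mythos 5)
Your overall route is the same as the paper's: reduce $\A\varphi=\la\varphi$ to the two-rod ODE system with the matching and jump conditions, solve with the endpoint-adapted ansatz, split into the case giving the characteristic equation $\mu=2\cot\mu$ and the case $\mu=k\pi$, and obtain distinctness from the one-root-per-interval monotonicity analysis (identical in substance to the paper's argument that $F(\mu)=2\cot\mu-\mu$ decreases from $+\infty$ to $-\infty$ on each $((k-1)\pi,k\pi)$). Your treatment of the odd family, of the characteristic equation, and of distinctness is correct, and your added observation about ordering by modulus is a nice touch the paper leaves implicit.

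However, there is a genuine error in the even case: the claim that when $\sin\mu=0$ (so $z=0$) ``the jump relation becomes automatic'' is false, and that relation is exactly what pins down the eigenvector. With $z=0$ the jump relation reads $v'(0)-u'(0)=0$, which in your ansatz gives $-b\mu\cos\mu-a\mu\cos\mu=0$, i.e.\ $b=-a$ (since $\cos(k\pi)=(-1)^k\neq 0$); only after imposing this do the two rods carry the same profile, $u=v=a(-1)^k\sin(k\pi x)$, which yields $\varphi_{2k}$ up to scale. If the jump relation really were automatic, the eigenspace of $\la_{2k}=-(k\pi)^2$ would be two-dimensional: for instance $(\sin(k\pi(1+x)),0,0)^t$ lies in $D(\A)$ and satisfies all the boundary and continuity conditions, yet it is \emph{not} an eigenvector, because the third component of $\A$ applied to it is $v'(0)-u'(0)=-(-1)^k k\pi\neq 0$, while an eigenvector would require this to equal $\la\cdot 0=0$. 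This is not a cosmetic point: the simplicity of each eigenvalue is what makes ``the corresponding eigenvector'' in the proposition well defined, and it is used later when the moment problem assigns one exponential per eigenvalue. The paper avoids the slip by keeping $U_{2k}'(0)=V_{2k}'(0)$ as part of the reduced system in precisely this case. The fix is one line — replace ``automatic'' by the computation forcing $b=-a$ — and the rest of your argument stands.
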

\begin{proof}
Look for nontrivial functions $\varphi_n=(U_n,V_n,Z_n)^t\in D(\A)$ such that $\A\varphi_n=\la_n\varphi_n$. 
We use an even index in the case that $Z_n=0$ and an odd index when $Z_n\neq0$.
The eigensystem corresponding to $Z_{2k}=0$ reduces to the problem of finding  $(U_{2k},V_{2k})$ such that
\eq{
	\begin{cases}
		U_{2k}''(x) = \la_{2k} U_{2k}(x), & x\in \omega_{1} \\
		V_{2k}''(x) = \la_{2k} V_{2k}(x), & x\in \omega_{2} \\
		U_{2k}'(0)= V_{2k}'(0)\\
		U_{2k}(0) = V_{2k}(0)=0\\
		U_{2k}(-1) = V_{2k}(1) = 0.
	\end{cases}
}
It is easy to check that $\varphi_{2k}$ satisfies the above with $\la_{2k}=-(k\pi)^2$.

Now consider the case that $Z_{2k-1}\neq0$.
The eigenvalue problem reduces to the problem of finding functions $(U_{2k-1},V_{2k-1})$, and real value $Z_{2k-1}$ such that
\eql{eigensystem}{
	\begin{cases}
		U_{2k-1}''(x) = -\mu^2_k U_{2k-1}(x), & x\in \omega_{1} \\
		V_{2k-1}''(x) = -\mu^2_k V_{2k-1}(x), & x\in \omega_{2} \\
		V'_{2k-1}(0)- U'_{2k-1}(0) =-\mu^2_k Z_{2k-1} \\
		U_{2k-1}(0) = V_{2k-1}(0) = Z_{2k-1}\\
		U_{2k-1}(-1) = V_{2k-1}(1) = 0.
	\end{cases}
}
From the boundary condition $U_{2k-1}(-1) = V_{2k-1}(1) = 0$, we have that the solution is of the form
\eq{
	&U_{2k-1}(x)=\sin\big((x+1)\mu_k\big)\\
	&V_{2k-1}(x)=C\sin\big((x-1)\mu_k\big)
}
for some constant $C$ to be determined.
The continuity condition $U_{2k-1}(0)=V_{2k-1}(0)=Z_{2k-1}$ gives
\eq{
	Z_{2k-1} =  \sin\left(\mu_k\right) = -C \sin\left(\mu_k\right).
}
Since $Z_{2k-1}$ is nonzero we have that $\mu_k$ is not a multiple of $\pi$. Furthermore, we find that $C=-1$. Then from the third equation in \eqref{eigensystem} we see that 
\eql{dir raw characteristic equation}{
	2\cot\left(\mu_k\right)=\mu_k.
}
Hence the solution to the eigensystem \eqref{eigensystem} is
\eq{
	\begin{pmatrix}
			U_{2k-1}(x)\\
			V_{2k-1}(x)\\
			Z_{2k-1}\\
		\end{pmatrix}
		=
	\begin{pmatrix}
			\sin( (1+x) \mu_k)\\
			\sin((1-x) \mu_k)\\
			\sin(\mu_k)\\
		\end{pmatrix}.
}

Finally, note that since the function $F(\mu)= 2 \cot \mu - \mu$ decreases monotonically from $+\infty $ to $-\infty$ over the interval $((k-1)\pi , k\pi)$ for all $k\in \mathbb N$,  there is exactly one root of $F$  in each interval $((k-1)\pi,k\pi)$ for all $k\in\N$.  Hence the eigenvalues 
\eq{
	\{-(k\pi)^2\}_{k\in\N}\cup\{-\mu_k^2\}_{k\in\N}
}
are distinct. 
\end{proof}

\begin{prop}\label{prop: dir eval separation}
The sequence $\{\mu_k\}$ in the Dirichlet case   (\ref{dir homogeneous})  satisfies the asymptotic estimate 
\eql{asy}{
	\mu_k=  (k-1)\pi + \frac2{k\pi} + {\mathcal O}\left(\frac1{n^2}\right).
}
Consequently, consecutive eigenvalues of $\A$ in \eqref{dir cauchy} satisfy the gap condition:
\eql{dir eval separation}{
	|\la_{n+1}-\la_n| \ge 4 + {\mathcal O}\left(\frac1{n}\right).
}
Moreover, the eigenfunctions are asymptotically normalized in the sense that 
\eq{
	\lim_{n\to\infty}\|\varphi_{n}\|=1.
}
\end{prop}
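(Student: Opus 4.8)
The plan is to establish the asymptotic expansion of $\mu_k$ first, and then to read off both the spectral gap and the normalization as consequences of it together with the characteristic equation \eqref{dir raw characteristic equation}.

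First I would write $\mu_k = (k-1)\pi + \delta_k$, where Proposition~\ref{prop: dir eigensystem} guarantees $\mu_k \in ((k-1)\pi, k\pi)$ and hence $\delta_k \in (0,\pi)$. Since $\cot$ is $\pi$-periodic, the equation $\mu_k = 2\cot\mu_k$ becomes $(k-1)\pi + \delta_k = 2\cot\delta_k$. As the left side grows without bound, $\cot\delta_k$ must blow up, which forces $\delta_k \to 0^+$; the Laurent expansion $\cot\delta_k = \delta_k^{-1} - \delta_k/3 + \mathcal{O}(\delta_k^3)$ then gives $2/\delta_k = (k-1)\pi + \mathcal{O}(\delta_k)$, so that $\delta_k = 2/((k-1)\pi) + \mathcal{O}(1/k^2)$. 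Rewriting $2/((k-1)\pi) = 2/(k\pi) + \mathcal{O}(1/k^2)$ produces the stated expansion \eqref{asy}. The hard part will be this bootstrapping step: pinning the correction term down to order $\mathcal{O}(1/k^2)$ requires substituting the leading estimate back into the transcendental equation and controlling the remainder, and it is the only place in the proof that goes beyond a direct computation.

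Next, for the gap condition, I would square the expansion to get $\mu_k^2 = (k-1)^2\pi^2 + 4 + \mathcal{O}(1/k)$. Using $\lambda_{2k} = -(k\pi)^2$ and $\lambda_{2k-1} = -\mu_k^2$ together with the interlacing $\mu_k < k\pi < \mu_{k+1}$ from Proposition~\ref{prop: dir eigensystem}, the consecutive eigenvalues split into two families of gaps. One family, $|\lambda_{2k} - \lambda_{2k-1}| = (k\pi)^2 - \mu_k^2 = (2k-1)\pi^2 - 4 + \mathcal{O}(1/k)$, grows linearly in $k$; the other, $|\lambda_{2k+1} - \lambda_{2k}| = \mu_{k+1}^2 - (k\pi)^2 = 4 + \mathcal{O}(1/k)$, stays near $4$. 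Taking the minimum over the two families yields the claimed bound \eqref{dir eval separation}.

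Finally, for the normalization I would evaluate $\|\varphi_n\|^2_{\mathcal{H}}$ directly. For even indices the two integrals of $\sin^2(k\pi x)$ over $\omega_1$ and $\omega_2$ each equal $1/2$ and the mass component is $0$, so $\|\varphi_{2k}\| = 1$ exactly. For odd indices the substitutions $s = 1+x$ and $s = 1-x$ reduce both rod integrals to $\int_0^1 \sin^2(s\mu_k)\,ds = 1/2 - \sin(2\mu_k)/(4\mu_k)$, giving $\|\varphi_{2k-1}\|^2 = 1 - \sin(2\mu_k)/(2\mu_k) + \sin^2\mu_k$. Since $\mu_k \to \infty$ the middle term vanishes, and from the characteristic equation in the form $\cot^2\mu_k = \mu_k^2/4$ one gets $\sin^2\mu_k = 4/(4 + \mu_k^2) \to 0$. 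Hence $\|\varphi_{2k-1}\| \to 1$, which establishes $\lim_{n\to\infty}\|\varphi_n\| = 1$.
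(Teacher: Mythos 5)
Your proof is correct and follows essentially the same route as the paper's: write $\mu_k=(k-1)\pi+\delta_k$, use periodicity of the cotangent to reduce the characteristic equation to an equation for the offset $\delta_k$, extract the asymptotics \eqref{asy}, obtain the gap condition by expanding the differences of squares of consecutive $\mu$'s, and verify the normalization by direct computation of the norms. The only differences are minor technical variations: you use a Laurent expansion with bootstrapping where the paper uses monotonicity and inverse-cotangent bounds, you treat both families of consecutive gaps explicitly (the paper computes only $|\lambda_{2k+1}-\lambda_{2k}|$, the smaller family), and for the odd-index norms you invoke the exact identity $\sin^2\mu_k=4/(4+\mu_k^2)$ coming from the characteristic equation rather than the asymptotic estimate \eqref{asy}.
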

\begin{proof}
From the end of the previous proof,  $\mu_k=(k-1)\pi+\ep_k$,  where $0<\ep_k <\pi$.  The characteristic equation  \eqref{dir characteristic equation} can be rewritten as 
\eq{
	\frac{(k-1)\pi +\ep_k}2 = \cot \ep_k
}
and thus by monotonicity, 
\eq{
	(k-1)\pi/2 < \cot \ep_k < k\pi/2.
}	
Taking inverse cotangent of each term gives 
\eq{
	\arctan \frac2{k\pi} < \ep_k < \arctan \frac2{(k-1)\pi}.
}
Hence by Taylor's formula we obtain \eqref{asy}.   

The estimate \eqref{dir eval separation} can be obtained  from 
\eq{
|\lambda_{2k+1}-\lambda_{2k}| &= (\mu_{k+1} +k\pi)(\mu_{k+1}-k\pi)\\
&= \left(2k\pi +{\mathcal O}\left(\frac1{k}\right)\right)\left( \frac2{k\pi}
	+ {\mathcal O}\left(\frac1{k^2}\right)\right) \\
&= 4 +{\mathcal O}\left(\frac1{k}\right).
}
Finally, it is easy to check that $\|\varphi_{2k}\|=1$ for all $k\in\N$ and using estimate \eqref{asy} that $\|\varphi_{2k-1}\|^2 = 1 + \mathcal{O}(k^{-2})$.
\end{proof}

\subsection{Spectral analysis for Neumann case \eqref{sys}, \eqref{neu homogeneous} }
\label{subsec: neu spectrum}

As in Subsection \ref{subsec: dir spectrum}, the eigenvalues of $\A$ (denoted $\la_n$) form a discrete sequence of negative numbers tending to negative infinity with corresponding eigenvectors $\varphi_n$ which form an orthogonal system for $\hi$.

\begin{prop}\label{prop: neu eigensystem}
The eigenvalues $\{\la_n\}_{n\in\N}$ of $\A$  in the Neumann case   (\ref{neu homogeneous})    are given by $\la_n=-\mu_n^2$ where $\{\mu_n\}_{n\in\N}$ are the roots of the characteristic equation
\eql{neu characteristic equation}{
	\mu = 2\cot2\mu.
}	
The corresponding eigenvectors are given by
\eql{neu evecs}{
\begin{split}
	\varphi_{2k-1}(x)&=
		\sqrt{2}\begin{pmatrix}
			\sin(\mu_{2k-1} (x+1))\\
			\tan\mu_{2k-1} \cos(\mu_{2k-1}(x-1))\\
			\sin\mu_{2k-1}
		\end{pmatrix}\\
	\varphi_{2k}(x)&=
		\sqrt{2}\begin{pmatrix}
			\cot\mu_{2k}\sin(\mu_{2k} (x+1))\\
			 \cos(\mu_{2k}(x-1))\\
			\cos\mu_{2k}
		\end{pmatrix}
\end{split}
}
and $\varphi_n\in D(\A)$ for all $n\in\N$.
\end{prop}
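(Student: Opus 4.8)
The plan is to follow the template of the proof of Proposition \ref{prop: dir eigensystem}, changing only the boundary condition at $x=1$ from Dirichlet to Neumann. I would look for nontrivial $\varphi_n=(U_n,V_n,Z_n)^t\in D(\A)$ (with $D(\A)$ as in \eqref{neu domain of A}) solving $\A\varphi_n=\la_n\varphi_n$. By Proposition \ref{prop: dir well posed} the operator is self-adjoint, dissipative and boundedly invertible, so every eigenvalue is strictly negative and I may write $\la_n=-\mu_n^2$ with $\mu_n>0$. Componentwise the eigenvalue equation reads $U''=-\mu^2U$ on $\omega_1$ and $V''=-\mu^2V$ on $\omega_2$, together with the transmission/boundary relations $U(0)=V(0)=Z$, $V'(0)-U'(0)=-\mu^2Z$, $U(-1)=0$, and now $V'(1)=0$ in place of $V(1)=0$.

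Next I would use the two end conditions to fix the shape of the solution up to scalars. The condition $U(-1)=0$ forces $U(x)=A\sin(\mu(x+1))$, while the Neumann condition $V'(1)=0$ forces $V(x)=B\cos(\mu(x-1))$. The continuity requirement $U(0)=V(0)=Z$ then gives $A\sin\mu=B\cos\mu=Z$. Substituting these into the jump condition $V'(0)-U'(0)=-\mu^2Z$ and cancelling the common factor $\mu Z$ yields $\tan\mu-\cot\mu=-\mu$; using the identity $\tan\mu-\cot\mu=-2\cot2\mu$ this is exactly the characteristic equation \eqref{neu characteristic equation}, $\mu=2\cot2\mu$, entirely parallel to \eqref{dir raw characteristic equation}. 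I would also record that at any root one must have $\sin\mu\neq0$ and $\cos\mu\neq0$, since either would create a pole of $\cot2\mu$ and contradict $\mu=2\cot2\mu$; in particular $Z\neq0$. A short separate check that $Z=0$ admits only the trivial solution then confirms that, unlike the Dirichlet case, there is no auxiliary $Z=0$ family, so \eqref{neu characteristic equation} accounts for the entire spectrum.

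Finally I would read off the eigenvectors by normalizing. Setting $A=1$ gives $Z=\sin\mu$ and $B=\tan\mu$, which is the form $\varphi_{2k-1}$ in \eqref{neu evecs}; setting $B=1$ gives $Z=\cos\mu$ and $A=\cot\mu$, which is $\varphi_{2k}$. The odd/even split simply selects, for each root, the scaling whose coefficient stays bounded: each root lies near either a zero of $\sin$ or a zero of $\cos$, and one picks the representation that does not divide by the small quantity, the overall factor $\sqrt2$ being the chosen normalization constant. Membership $\varphi_n\in D(\A)$ is then immediate, since $U$ and $V$ are smooth trigonometric functions (hence in $H^2(\omega_1)$ and $H^2(\omega_2)$) and the relations $U(0)=V(0)=Z$ and $V'(1)=0$ hold by construction.

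I expect the only genuine subtleties to be mild: the bookkeeping that matches the two displayed normalized forms, and the verification that no eigenvalues are lost in passing to $\mu=2\cot2\mu$ (which requires ruling out the $Z=0$ case). The trigonometric reduction itself is routine. A careful enumeration of the roots, one in each interval of length $\pi/2$, together with the asymptotics and normalization of $\varphi_n$, would naturally be deferred to a separation proposition analogous to Proposition \ref{prop: dir eval separation}.
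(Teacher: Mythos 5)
Your proposal is correct and follows essentially the same route as the paper's proof: reduce the eigenvalue problem to the ODE system, use $U(-1)=0$ and $V'(1)=0$ to force the forms $A\sin(\mu(x+1))$ and $B\cos(\mu(x-1))$, derive $\mu=2\cot 2\mu$ from the continuity and jump conditions after ruling out $Z=0$, and read off the two displayed normalizations. Your extra remarks (the explicit check that $Z=0$ yields only the trivial solution, and that $\sin\mu\neq 0$, $\cos\mu\neq 0$ at the roots) merely make explicit what the paper asserts in passing, so there is no substantive difference in approach.
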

\begin{proof}
The eigenvalue problem $\A\varphi_n=\la_n\varphi_n$ with $\varphi_n=(U_n,V_n,Z_n)^t\in D(\A)$ is the following system:
\eql{neu eigensystem}{
	\begin{cases}
		U_{n}''(x) = \la_{n} U_{n}(x), & x\in \omega_{1} \\
		V_{n}''(x) = \la_{n} V_{n}(x), & x\in \omega_{2} \\
		V'_{n}(0)- U'_{n}(0) =\la_{n} Z_{n} \\
		U_{n}(0) = V_{n}(0) = Z_{n}\\
		U_{n}(-1) = V'_{n}(1) = 0.
	\end{cases}
}
First note that the possibility of $Z_n=0$ leads to the trivial solution.
Hence $Z_n\neq0$ for all $n\in\N$.
Then from the first two equations and the boundary conditions we find that 
\eq{
	&U_n(x)=\sin(\mu_n(x+1))\\
	&V_n(x)=C\cos(\mu_n(x-1))
}
for some nonzero constant $C$ to be determined. 
The continuity condition $U_{n}(0)=V_{n}(0)$ gives
\eq{
	\sin\mu_n=C\cos\mu_n
}
and since $Z_n$ is nonzero for all $n\in\N$ we have that $C=\tan\mu_n$. 
Then from the third equation in \eqref{neu eigensystem} we see that 
\eq{
	\mu_n=-\tan\mu_n+\cot\mu_n
}
which is equivalent to the {\it characteristic equation} \eqref{neu characteristic equation}.
Hence the corresponding sequence of eigenvectors is 
\eq{
	\varphi_{n}(x)=
		\begin{pmatrix}
			\sin( \mu_n (1+x)  )\\
			\tan\mu_n \cos(\mu_n(x-1))\\
			\sin\mu_n\\
		\end{pmatrix}
}
which agrees with \eqref{neu evecs} after multiplying by normalizing factors $\sqrt{2}$ for $n=2k-1$ and $\sqrt{2}\cot\mu_n$ for $n=2k$.
\end{proof}
Following the ideas of Proposition \ref{prop: dir eval separation}, one can prove the following result.
\begin{prop}\label{prop: neu eval separation}
The sequence $\{\mu_k\}$   in the Neumann case (\ref{neu homogeneous})   satisfies the asymptotic estimate 
\begin{equation}\label{neu asy} 
	\mu_k=  \frac{(k-1)\pi}{2} + \frac1{k\pi} + {\mathcal O}\left(k^{-2}\right). 
\end{equation}
Consequently, consecutive eigenvalues of $\A$ in \eqref{dir cauchy} satisfy the gap condition:
\eql{neu eval separation}{
	|\la_{n+1}-\la_n| \ge \frac{n\pi^2}{2} + {\mathcal O}\left(1\right).
}
Moreover, the eigenfunctions are asymptotically normalized in the sense that 
\eq{
	\lim_{n\to\infty}\|\varphi_{n}\|=1.
}
\end{prop}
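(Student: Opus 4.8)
The plan is to mirror the proof of Proposition~\ref{prop: dir eval separation} line for line, with the Neumann characteristic equation \eqref{neu characteristic equation}, $\mu = 2\cot 2\mu$, playing the role of the Dirichlet equation $\mu = 2\cot\mu$. Three things must be shown: the asymptotic location \eqref{neu asy} of the roots $\mu_k$, the gap bound \eqref{neu eval separation}, and the asymptotic normalization of the eigenvectors. I would establish them in that order, since the gap estimate is a direct corollary of the root asymptotics.

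For the asymptotics I would first localize the roots. As $\mu$ sweeps the interval $\left(\frac{(k-1)\pi}{2},\frac{k\pi}{2}\right)$ the quantity $2\mu$ sweeps $((k-1)\pi,k\pi)$, so $2\cot 2\mu$ decreases strictly from $+\infty$ to $-\infty$ while $\mu$ increases; hence $2\cot 2\mu - \mu$ has exactly one zero $\mu_k$ in each such interval, and I would write $\mu_k = \frac{(k-1)\pi}{2} + \eta_k$ with $0 < \eta_k < \frac{\pi}{2}$. The $\pi$-periodicity of $\cot$ gives $\cot(2\mu_k) = \cot(2\eta_k)$, so \eqref{neu characteristic equation} collapses to the relation $\cot 2\eta_k = \frac{(k-1)\pi}{4} + \frac{\eta_k}{2}$. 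Since $0 < \eta_k < \frac{\pi}{2}$ the right-hand side is trapped in an interval of width $\frac{\pi}{4}$, and taking inverse cotangents (that is, $\arctan$ of the reciprocal, using monotonicity exactly as in Proposition~\ref{prop: dir eval separation}) sandwiches $\eta_k$ between two explicit arctangents differing by $\mathcal{O}(k^{-2})$. Expanding $\arctan x = x + \mathcal{O}(x^3)$ then produces \eqref{neu asy}.

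The gap bound \eqref{neu eval separation} follows at once from $\la_n = -\mu_n^2$ by factoring $|\la_{n+1}-\la_n| = (\mu_{n+1}-\mu_n)(\mu_{n+1}+\mu_n)$ and inserting \eqref{neu asy}: the first factor is $\frac{\pi}{2} + \mathcal{O}(n^{-1})$ and the second is $n\pi + \mathcal{O}(1)$, so the product is $\frac{n\pi^2}{2} + \mathcal{O}(1)$. The linear growth in $n$ --- in contrast to the bounded Dirichlet gap --- is structural: in the Neumann case every eigenvalue comes from the single family $\{-\mu_n^2\}$ with $\mu_n \sim \frac{(n-1)\pi}{2}$, with no second interlacing family to keep consecutive eigenvalues close.

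The normalization is the step I expect to demand the most bookkeeping. Using the eigenvectors \eqref{neu evecs} and the shifts $s = x+1$ on $\omega_1$, $s = x-1$ on $\omega_2$, the two $L^2$ components of $\|\varphi_n\|^2_\hi$ reduce to elementary integrals of the form $1 \mp \frac{\sin 2\mu_n}{2\mu_n}$ scaled by a trigonometric prefactor, while the point-mass coordinate contributes $2\sin^2\mu_n$ or $2\cos^2\mu_n$. For $n = 2k-1$ the prefactors on $\omega_2$ and on the mass coordinate are $\tan^2\mu_{2k-1}$ and $\sin^2\mu_{2k-1}$; by \eqref{neu asy}, $\mu_{2k-1} \to (k-1)\pi$, so both vanish like $\mathcal{O}(k^{-2})$ while the $\omega_1$ term tends to $1$. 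For $n = 2k$ the roles swap: $\mu_{2k} \to \frac{(2k-1)\pi}{2}$ is an odd multiple of $\frac{\pi}{2}$, so $\cot^2\mu_{2k}$ and $\cos^2\mu_{2k}$ vanish and the $\omega_2$ term tends to $1$. The one point requiring genuine care is to confirm that the apparently singular prefactors $\tan\mu_{2k-1}$ and $\cot\mu_{2k}$ actually decay rather than blow up; this is precisely where the root locations from \eqref{neu asy} are used, so no estimate beyond the asymptotics already in hand is needed.
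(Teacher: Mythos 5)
Your strategy is exactly the intended one --- the paper gives no separate proof of this proposition, deferring entirely to the argument of Proposition \ref{prop: dir eval separation} --- and your localization of one root per interval, your gap factorization, and your normalization analysis are all structured correctly. However, your first step does not prove the formula as printed, and you assert agreement without carrying out the final expansion. From your (correct) relation $\cot 2\eta_k = \frac{(k-1)\pi}{4} + \frac{\eta_k}{2}$ with $0<\eta_k<\frac{\pi}{2}$, the sandwich is
\[
\frac12\arctan\frac{4}{k\pi} \;<\; \eta_k \;<\; \frac12\arctan\frac{4}{(k-1)\pi},
\]
and expanding the arctangents gives $\eta_k = \frac{2}{k\pi} + \mathcal{O}(k^{-2})$, hence
\[
\mu_k = \frac{(k-1)\pi}{2} + \frac{2}{k\pi} + \mathcal{O}\left(k^{-2}\right),
\]
whose correction term is \emph{twice} the $\frac{1}{k\pi}$ appearing in \eqref{neu asy}. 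So the claim that your expansion ``produces \eqref{neu asy}'' is false as written.

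The discrepancy is not an error in your substance: the coefficient $\frac{2}{k\pi}$ is the one actually forced by the characteristic equation \eqref{neu characteristic equation}, and it is the one consistent with the paper's later computation in the Neumann moment problem, where $\tan\mu_{2k-1} = \frac{1}{k\pi} + \mathcal{O}(k^{-2})$ is used. Indeed, writing $\mu_{2k-1} = (k-1)\pi + \eta_{2k-1}$ and using the periodicity of $\tan$, that identity requires $\eta_{2k-1} = \frac{2}{(2k-1)\pi} + \mathcal{O}(k^{-2}) = \frac{1}{k\pi} + \mathcal{O}(k^{-2})$, i.e.\ precisely your version, whereas the printed \eqref{neu asy} would instead give $\tan\mu_{2k-1} = \frac{1}{2k\pi}+\mathcal{O}(k^{-2})$. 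In other words, \eqref{neu asy} as printed carries a factor-of-two slip, and your derivation detects it. The remaining conclusions are untouched: both the gap estimate \eqref{neu eval separation} and the asymptotic normalization depend only on the leading term $\frac{(k-1)\pi}{2}$ together with $\eta_k = \mathcal{O}(k^{-1})$, and your treatment of those two parts (the factorization $|\la_{n+1}-\la_n| = (\mu_{n+1}-\mu_n)(\mu_{n+1}+\mu_n)$ and the decay of $\tan^2\mu_{2k-1}$, $\sin^2\mu_{2k-1}$, $\cot^2\mu_{2k}$, $\cos^2\mu_{2k}$) is correct. What your proof is missing is only this flag: either prove the corrected formula and note that the correction term in \eqref{neu asy} should read $\frac{2}{k\pi}$, or else your final assertion stands in contradiction with your own computation.
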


\section{Proof of Controllability results}
\label{section: Proof of Controllability results}
We begin with the case of Neumann control: \eqref{sys}, \eqref{neu control}. 

\subsection{Neumann control}
\label{neu control and obs}

The dual observation problem to \eqref{sys}, \eqref{neu control}  is
\eq{
\begin{cases}
	-\dot{\tilde u} - \tilde u'' = 0, & t>0, \ x\in \omega_1 \\
	-\dot{\tilde v} - \tilde v'' = 0, & t>0, \ x\in \omega_2 \\
	-\dot{\tilde z} = \tilde v'(t,0) - \tilde u'(t,0), & t>0\\
	\tilde u(t,0) = \tilde v(t,0) = \tilde z(t), & t>0\\	
	\tilde u(t,-1) = \tilde v'(t,1)=0, & t>0
\end{cases}
}
with terminal data at $t=T$ given by 
\eq{
	\begin{cases}
	\tilde u^T(x) = \tilde u(T,x), & x\in \omega_1\\
	\tilde v^T(x) = \tilde v(T,x), & x\in \omega_2\\
	\tilde z^T = z(T).
	\end{cases}
}
By letting $\tilde y=(\tilde u,\tilde v,\tilde z)^t$, the above problem can be written as 
\eql{neu obs cauchy}{
	-\dot{\tilde y}=\A \tilde y	, \quad \tilde y(T)=\tilde y^T\in\hi, \quad t>0.
}
Then $\tilde y\in C([0,T], \hi)$ and is given by  
\eql{neu homosol}{ 
	\tilde y(t) = {\mathbb T}(T-t) \tilde y^T; \qquad 0\leq t\leq T.
}
Let $y$ be a smooth solution of the control problem with smooth $f\in L^2(0,T)$.
 Formal integration by parts then shows 
\eq{
\begin{split}
	0&=\int^T_0\int^0_{-1} (\dot u - u'')\tilde{u}\ dx dt+\int^T_0\int^1_{0} (\dot v - v'')\tilde{v}\ dx dt\\
	&=\<y(T),\tilde{y}^T>_\hi - \<y^0,\tilde{y}(0)>_\hi
		-\int^T_0  f(t)\tilde{v}(t,1)\ dt.
\end{split}
}
Equivalently, 
\eql{ws2}{  
	\<y(T),\tilde{y}^T>_\hi =\<y^0, \mathbb T_T\tilde{y}^T>_\hi +\int^T_0  f(t)\tilde{v}(t,1)\ dt.
}
Since the functional $\ell(\tilde y):=\tilde v(1)$ is continuous on $X_{1/2}=\W$ it follows from Propositions 5.1.3 and 10.2.1 in \cite{TuWe} that for solutions of \eqref{neu obs cauchy} there exists $C>0$ for which 
\eql{neu contobs}{  
	\|\tilde v(t,1)\|_{L^2(0,T)} \le C\|\tilde y^T\|_\hi\qquad \forall~\tilde y^T \in \hi.
}
Hence, equation \eqref{ws2} uniquely defines $y(T)$ as an element of $\hi$.
Applying this definition for $s\in[0,T]$ we see 
\eql{neu solution space}{ 
	y \in C([0,T],\hi) 
} 
and there exists $C>0$ for which 
\eql{xxx}{ 
	\|y\|_{L^\infty(0,T;\hi)} \le C(\| y^0\|_\hi + \|f\|_{L^2(0,T)}).
}
As before we have the following lemma.
\begin{lemma}\label{lem: neu characterization}
The control problem \eqref{sys}, \eqref{neu control} is null controllable in time $T>0$ if and only if, for any $y^0\in \hi$ there is $f\in L^2(0,T)$ such that 
\eql{neu characterization}{
	\<y^0,\mathbb T_T\tilde{y}^T>_\hi = -\int^T_0 f(t) \tilde{v}(t,1)dt
}
holds for all $\tilde{y}^T\in \hi$, where $\tilde{y}$ is the solution to the observation problem \eqref{neu obs cauchy}.
\end{lemma}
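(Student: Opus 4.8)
The plan is to read the equivalence directly off the transposition identity \eqref{ws2}, which has already been established to hold for every $\tilde{y}^T\in\hi$ once $y(T)$ is understood as the solution defined by transposition. The one conceptual ingredient is that nullity of $y(T)$ in the Hilbert space $\hi$ is the same as orthogonality of $y(T)$ to every test vector $\tilde{y}^T$, so the characterization is essentially a restatement of \eqref{ws2} together with the definition of null controllability.

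First I would recall that, by the definition of the solution by transposition, the identity \eqref{ws2} holds for all $\tilde{y}^T\in\hi$, where $\tilde{y}$ solves the backward problem \eqref{neu obs cauchy} and is given explicitly by \eqref{neu homosol}. The right-hand side of \eqref{ws2} is a bounded linear functional of $\tilde{y}^T$: the term $\langle y^0,\mathbb{T}_T\tilde{y}^T\rangle_\hi$ is controlled by $\|y^0\|_\hi\,\|\tilde{y}^T\|_\hi$ via boundedness of the semigroup, and the integral term is controlled through the admissibility estimate \eqref{neu contobs}, namely $\|\tilde{v}(\cdot,1)\|_{L^2(0,T)}\le C\|\tilde{y}^T\|_\hi$, combined with Cauchy--Schwarz in $t$. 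This is precisely what guarantees that \eqref{ws2} uniquely determines $y(T)$ as an element of $\hi$.

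Next I would unwind the definition of null controllability. The system \eqref{sys}, \eqref{neu control} is null controllable in time $T$ if and only if, for every $y^0\in\hi$, there is $f\in L^2(0,T)$ with $y(T)=0$ in $\hi$. Since $\hi$ is a Hilbert space, $y(T)=0$ if and only if $\langle y(T),\tilde{y}^T\rangle_\hi=0$ for all $\tilde{y}^T\in\hi$. Substituting \eqref{ws2} into this condition and moving the integral to the other side yields exactly
\[
	\langle y^0,\mathbb{T}_T\tilde{y}^T\rangle_\hi = -\int_0^T f(t)\,\tilde{v}(t,1)\,dt \qquad \text{for all } \tilde{y}^T\in\hi,
\]
which is \eqref{neu characterization}. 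Conversely, if some $f\in L^2(0,T)$ satisfies \eqref{neu characterization} for all $\tilde{y}^T$, then \eqref{ws2} gives $\langle y(T),\tilde{y}^T\rangle_\hi=0$ for all $\tilde{y}^T$, hence $y(T)=0$. Both implications thus follow simultaneously from the single identity \eqref{ws2}.

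The main obstacle is not analytic but bookkeeping: the integration-by-parts identity leading to \eqref{ws2} was derived only formally, for smooth $y$ and smooth $f$, so one must justify its validity for general $y^0\in\hi$ and $f\in L^2(0,T)$. I would handle this by density, verifying \eqref{ws2} first for smooth data and then passing to the limit using the continuity bounds \eqref{xxx} and \eqref{neu contobs}, which show that each term in \eqref{ws2} depends continuously on $(y^0,f,\tilde{y}^T)$ in the relevant topologies. Since \eqref{ws2} is in fact adopted as the \emph{defining} relation for the transposition solution $y(T)$, this continuity is exactly what makes the definition consistent, and the stated equivalence then requires no further argument.
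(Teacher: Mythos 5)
Your proposal is correct and takes essentially the same approach as the paper: both arguments read the equivalence directly off the transposition identity \eqref{ws2}, using that $y(T)=0$ in $\hi$ if and only if $\<y(T),\tilde{y}^T>_\hi=0$ for every $\tilde{y}^T\in\hi$. Your additional remarks on density and on \eqref{ws2} being the defining relation for the transposition solution simply make explicit what the paper leaves implicit.
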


\begin{proof}
First assume that \eqref{neu characterization} holds for all $\tilde y^T\in\hi$. 
Then by \eqref{ws2}, $y(T)=0$.
Conversely, if $f$ is a control for which $y(T)=0$, then \eqref{neu characterization} follows from equation \eqref{ws2}. 
\end{proof}

We are now ready to reduce the control problem \eqref{sys}, \eqref{neu control} to a moment problem. 
Any initial data $y^0=(u^0,v^0,z^0)^t$ in $\hi$ for the control problem can be expressed in terms of the eigenfunctions as
\eql{neu initial as fourier}{
	y^0 =\sum_{n\in\N} y^0_n \varphi_n
}
where the Fourier coefficients $\{y^0_n\}_{n\in\N}$ belong to $\ell^2$.
Let $\tilde y_n=(\tilde u_n,\tilde v_n,\tilde z_n)^t$ be the eigensolution of \eqref{neu obs cauchy} given by
\eql{neu sol to obs}{
	\tilde y_n(t,x) = e^{\la_{n}(T-t)}\varphi_n(x).
} 
In particular, note that 
\eq{
		&\tilde v_n(t,1) = 
		\begin{cases}
			\sqrt{2}e^{\la_{n}(T-t)}\tan\mu_n, & n~\text{ odd}\\
			\sqrt{2}e^{\la_{n}(T-t)}, & n~\text{ even.}\\			
		\end{cases}
}
Applying these solutions to equation \eqref{neu characterization} we obtain the following moment problem:
\eql{neu moment problem}{ 
	\frac{a_{n}}{b_n} e^{\la_{n}T}
	=\int^T_0 f(T-\tau)e^{\la_{n}\tau}d\tau, \quad n\in\N
}
where 
\eql{neu bns}{
	b_n=
	\begin{cases}
		- \tan\mu_n & n \text{ is odd}\\
		-1, & n \text{ is even}	
	\end{cases}
}
and by Proposition \ref{prop: neu eval separation}, $a_n=\|\varphi_n\|^2 y^0_n\in\ell^2$. In particular note that for $n=2k-1$
\eq{
	\tan\mu_{2k-1}=\tan\left(\frac{1}{k\pi}+\mathcal{O}(k^{-2})\right)
	=\frac{1}{k\pi}+\mathcal{O}(k^{-2})
}
and furthermore since $\tan\mu_{2k-1}\neq0$ for all $k\in\N$, there exists $\ep>0$ such that 
\eq{
	|b_n|\geq\frac{\ep}{n}, \qquad \forall n\in\N.
}
From our estimates of $\mu_n$, $\la_n$, $b_n$ and $a_n$,  it is easy to show that there are constants $K,\delta>0$ such that
\eql{neu bound moment lhs}{
	\left|\frac{a_n}{b_n}e^{\la_n T}\right|\leq K e^{-\delta n^2}, \qquad n\in\N.
}
From equations \eqref{neu asy} and \eqref{neu eval separation} we see that the series $\sum1/\la_n$ converges, and that there exists a constant $\rho>0$ such that $|\la_{k+1}-\la_k|>\rho$ for all $k\in\N$.
This implies the existence of a biorthogonal sequence $\{\theta_j(\tau)\}_{j\in\N}$ (see \cite{RussellFattBiorth}, \cite{FerManLuz}) such that 
\eql{biorthogonal sequence}{
	\int^T_0 \theta_j(\tau)e^{\la_n \tau}~d\tau=\delta_{j,n}=
	\begin{cases}
		1, & j=n\\
		0, & j\neq n.
	\end{cases}
}
By the method of Russell and Fattorini in \cite{RussellFattBiorth} we have that there are $M_1,M_2>0$ such that 
\eq{
	\|\theta_j\|\leq M_1 e^{M_2 j}.
}
It is easy to see that the above implies the convergence of 
\eq{
	f(T-\tau)=\sum_{j\in\N}\frac{a_j}{b_j}e^{\la_j T}\theta_j(\tau)
}
which provides a solution to the moment problem \eqref{dir moment problem}. 
The proof of Proposition \ref{prop: null control} for the case of Neuman control \eqref{neu control}, is a direct consequence of Lemma \ref{lem: neu characterization} and the existence of the biorthogonal sequence $\{\theta_j(\tau)\}_{j\in\N}$.

\subsection{Dirichlet Control}
\label{dir control and obs}

The dual observation problem to \eqref{sys}, \eqref{dir control}  is
\eql{dir obs sys}{
\begin{cases}
	-\dot{\tilde u} - \tilde u'' = 0, & t>0, \ x\in \omega_1 \\
	-\dot{\tilde v} - \tilde v'' = 0, & t>0, \ x\in \omega_2 \\
	-\dot{\tilde z} = \tilde v'(t,0) - \tilde u'(t,0), & t>0\\
	\tilde u(t,0) = \tilde v(t,0) = \tilde z(t), & t>0\\	
	\tilde u(t,-1) = \tilde v(t,1)=0, & t>0
\end{cases}
}
with terminal data at $t=T$ given by 
\eql{dir obs terminal data}{
	\begin{cases}
	\tilde u^T(x) = \tilde u(T,x), & x\in \omega_1\\
	\tilde v^T(x) = \tilde v(T,x), & x\in \omega_2\\
	\tilde z^T = z(T),
	\end{cases}
}
and observation $Y(t)= \tilde v'(t,1)$.  
By letting $\tilde y =(\tilde u, \tilde v, \tilde z)^t$, the above problem can be written as a Cauchy problem as
\eql{dir obs cauchy}{
	-\dot{\tilde y}=\A {\tilde y}	, \quad \tilde y(T)=\tilde y^T , \quad t>0.
}
If $\tilde y^T\in X_{1/2}={\mathcal W}$ then $\tilde y\in C([0,T], X_{1/2})$ is given by  
\eql{homosol}{ 
	\tilde y(t) = {\mathbb T}(T-t) \tilde y^T; \qquad 0\leq t\leq T.
}
Let $y$ be a smooth solution of the control problem with smooth $f\in L^2(0,T)$ and let $\tilde y$ be solution of the dual problem \eqref{dir obs cauchy}.   
Integration by parts as earlier results in the identity
\eql{ws1}{  
	\<y(T),\tilde{y}^T> =\<y^0, \mathbb T_T\tilde{y}^T>_\hi -\int^T_0  f(t)\tilde{v}'(t,1)\ dt
}
where $ \< \cdot, \cdot> $ denotes the duality pairing in $X_{-1/2}\times X_{1/2}$.

In the case of the heat equation 
\eq{
\begin{cases}
	\dot q =q'' & 0<x<1,\ t>0\\
	q(t,0)= q(t,1)= 0 &t>0\\
	q(0,x)= q^0\in H^1_0(0,1) & 0<x<1
\end{cases}
}
it is well known (e.g. \cite{Evans}) that for each $T>0$ there exists $C>0$ for which
\eq{
	\|q'(\cdot,1)\|_{L^2(0,T)}\le C \|q^0\|_{H^1_0(0,1)}.
}
One can verify that the same estimate  holds for solutions of \eqref{dir obs cauchy} in the sense that
there exists $C>0$ for which 
\eql{contobs}{  
	\|\tilde v'(t,1)\|_{L^2(0,T)} \le C\|\tilde y^T\|_{1/2}\qquad \forall~\tilde y^T \in X_{1/2}.
}
Since the semigroup $\mathbb T$ is strongly continuous on $X_{1/2}=H^1_0(\Omega)$ it follows that  the identity \eqref{ws1} defines the value $y(s)$ for all $s\in [0,T]$ as an element of $X_{-1/2}$  for which there exists $C>0$ such that 
\eq{ 
	\|y\|_{L^\infty(0,T;X_{-1/2})} \le C(\| y^0\|_\hi + \|f\|_{L^2(0,T)})
}
and moreover
\eql{dir trans sol space}{
	y \in C([0,T],X_{-1/2}). 
} 
The above estimate \eqref{dir trans sol space} is sometimes referred to as admissibility of the boundary control operator corresponding to Dirichlet control, and can also be derived in the framework of ``well posed boundary control systems"; see of \cite[    Prop. 10.7.1  ]{  TuWe}.

Analogous to Lemma \ref{lem: neu characterization} the following lemma characterizes  the problem of null controllability of \eqref{sys}, \eqref{dir control} in terms of the solution $\tilde y$ of the observation problem \eqref{dir obs cauchy}.
\begin{lemma} 
\label{lem: dir characterization}
The control problem \eqref{sys}, \eqref{dir control} is null controllable in time $T>0$ if and only if, for any $y^0\in \hi$ there is $f\in L^2(0,T)$ such that 
\eql{dir characterization}{
	\<y^0,\mathbb T_T\tilde{y}^T>_\hi = \int^T_0 f(t) \tilde{v}'(t,1)dt
}
holds for all $\tilde{y}^T\in \hi$, where $\tilde y =(\tilde u, \tilde v, \tilde z)^t$ is a solution of \eqref{dir obs cauchy}.
\end{lemma}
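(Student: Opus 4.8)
The plan is to follow the proof of Lemma \ref{lem: neu characterization} almost verbatim, with the transposition identity \eqref{ws1} playing the role that \eqref{ws2} played in the Neumann case. The one genuine difference is a matter of function spaces: for Dirichlet control the transposition solution only satisfies $y \in C([0,T], X_{-1/2})$ by \eqref{dir trans sol space}, so $y(T)$ is an element of $X_{-1/2}$ rather than of $\hi$, and ``null controllability'' must be read as $y(T)=0$ in $X_{-1/2}$. Accordingly, the duality pairing $\<y(T),\tilde y^T>$ appearing in \eqref{ws1} is the $X_{-1/2}\times X_{1/2}$ pairing, so the natural class of test functions is $\tilde y^T\in X_{1/2}=\W$ rather than all of $\hi$.

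First I would record the elementary characterization of the zero element: since $X_{1/2}$ is dense in $\hi$ and $y(T)\in X_{-1/2}$, we have $y(T)=0$ in $X_{-1/2}$ if and only if $\<y(T),\tilde y^T>=0$ for every $\tilde y^T\in X_{1/2}$; in particular it suffices to test against the eigenfunctions $\varphi_n$, which lie in $D(\A)\subset X_{1/2}$. Inserting this into \eqref{ws1}, the equation $\<y(T),\tilde y^T>=0$ is, for each $\tilde y^T\in X_{1/2}$, exactly \eqref{dir characterization}. This yields both implications at once: if \eqref{dir characterization} holds for all such $\tilde y^T$ then $\<y(T),\tilde y^T>=0$ for all of them and hence $y(T)=0$; conversely, if $f$ drives $y(T)$ to $0$ then \eqref{ws1} forces \eqref{dir characterization}.

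The step requiring the most care --- and the one I expect to be the main obstacle --- is justifying that \eqref{ws1} is valid not merely for the smooth solutions used to derive it by integration by parts, but for general $y^0\in\hi$ and $f\in L^2(0,T)$. Here the two a priori estimates established just above the lemma do the work: the hidden-regularity (admissibility) bound \eqref{contobs}, namely $\|\tilde v'(\cdot,1)\|_{L^2(0,T)}\le C\|\tilde y^T\|_{1/2}$, guarantees that the boundary integral $\int_0^T f(t)\tilde v'(t,1)\,dt$ is well-defined and depends continuously on $(f,\tilde y^T)\in L^2(0,T)\times X_{1/2}$, while \eqref{dir trans sol space} together with its accompanying bound $\|y\|_{L^\infty(0,T;X_{-1/2})}\le C(\|y^0\|_\hi+\|f\|_{L^2(0,T)})$ controls the left-hand pairing. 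Since all three terms in \eqref{ws1} are continuous in $(y^0,f)$ in these topologies and smooth data are dense, \eqref{ws1} extends to all $y^0\in\hi$, $f\in L^2(0,T)$ and $\tilde y^T\in X_{1/2}$, which is all the argument above requires.

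Finally, I would remark that writing ``$\tilde y^T\in\hi$'' in the statement is harmless: the right-hand side of \eqref{dir characterization} is controlled only through the $X_{1/2}$-norm of $\tilde y^T$ by \eqref{contobs}, so the condition is effectively imposed on the dense subspace $X_{1/2}$, and this is precisely the family of test functions that detects $y(T)=0$ in $X_{-1/2}$.
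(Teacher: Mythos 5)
Your proposal is correct and takes essentially the same route as the paper: the paper omits an explicit proof, presenting the lemma as analogous to Lemma \ref{lem: neu characterization}, whose proof is exactly your central step of reading both implications directly off the transposition identity \eqref{ws1} (there \eqref{ws2}). Your additional care about the function spaces --- that $y(T)$ lives in $X_{-1/2}$, that \eqref{ws1} extends to general $y^0\in\hi$, $f\in L^2(0,T)$ via \eqref{contobs} and \eqref{dir trans sol space}, and that testing against $\tilde y^T\in X_{1/2}$ suffices to detect $y(T)=0$ --- simply makes explicit what the paper builds into its definition of the solution by transposition.
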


We are now ready to reduce the control problem \eqref{sys}, \eqref{dir control} to a moment problem. 
Any initial data $y^0=(u^0,v^0,z^0)^t$ in $\hi$ for the control problem can be expressed in terms of the eigenfunctions as
\eql{dir initial as fourier}{
	y^0 =\sum_{n\in\N} y^0_n \varphi_n
}
where the Fourier coefficients $\{y^0_n\}_{n\in\N}$ belong to $\ell^2$.
Let $\tilde y_n=(\tilde u_n,\tilde v_n,\tilde z_n)^t$ be the eigensolution of \eqref{dir obs cauchy} given by
\eql{dir sol to obs}{
	\tilde y_n(t,x) = e^{\la_{n}(T-t)}\varphi_n(x).
} 
In particular, note that
\eq{
	\tilde v'_n(t,1)=
	\begin{cases}
		e^{\la_{2k}(T-t)}k\pi (-1)^{k}, & n=2k\\
		-e^{\la_{2k-1}(T-t)}\mu_{k}, & n=2k-1.
	\end{cases}
}
We plug these solutions into equation \eqref{dir characterization} to obtain the corresponding moment problem
\eql{dir moment problem}{ 
	a_{n} e^{\la_{n}T}
	=b_n \int^T_0 f(T-\tau)e^{\la_{n}\tau}d\tau 
}
for all $n\in\N$ where 
\eql{bns}{
	b_n=
	\tilde v'_n(T,1)=
	\begin{cases}
		(-1)^k k\pi, & n=2k\\
		-\mu_k, & n =2k-1	
	\end{cases}
}
and by Proposition \ref{prop: dir eval separation}, $a_n=\|\varphi_n\|^2 y^0_n\in\ell^2$.
Again, it is easy to show that there exists constants $K,\delta>0$ such that \eqref{neu bound moment lhs} holds.
From equations \eqref{asy} and \eqref{dir eval separation} we see that the series $\sum1/\la_n$ converges, and that there exists a constant $\rho>0$ such that $|\la_{k+1}-\la_k|>\rho$ for all $k\in\N$.
This implies the existence of a biorthogonal sequence $\{\theta_j(\tau)\}_{j\in\N}$ such that there are constants $M_1,M_2>0$ such that 
\eq{
	\|\theta_j\|\leq M_1 e^{M_2 j}.
}
Hence, as earlier, 
\eq{
	f(T-\tau)=\sum_{j\in\N}\frac{a_j}{b_j}e^{\la_j T}\theta_j(\tau)
}
converges and provides a solution to the moment problem \eqref{dir moment problem}. 
The proof of Proposition \ref{prop: null control} for the case of Dirichlet control \eqref{dir control}, is a direct consequence of Lemma \ref{lem: dir characterization}.


\begin{rmk}
The numbers $b_k$ in \eqref{bns} and \eqref{neu bns}, are called {\it control input coefficients} and can be viewed as Fourier coefficients of an element $b$ of $X_{-1}$ for which the control problem \eqref{sys} with either \eqref{dir control} or \eqref{neu control} can be formulated as 
\eq{
	\dot y=\A y + bf, \quad y(0)=y^0.
}
In the Neumann case, the input element is {\it admissible} on the state space $X_0=\hi$, or equivalently that \eqref{neu solution space} and \eqref{xxx} hold.
In the Dirichlet case, $b$ is admissible on the state space $X_{-1/2}$.
Both of these spaces are slightly suboptimal in the sense that the Carleson measure criterion due to Ho and Russell \cite{HoRussell} and Weiss \cite{WeissAdm} can be used as in \cite{HansenZhang} to show admissibility holds in the spaces $X_{1/4}$ and $X_{-1/4}$ respectively for the Neumann and Dirichlet control problems.\end{rmk}





\end{document}